\newtheorem{prop}[equation]{Proposition}
\newtheorem{lemma}[equation]{Lemma}
\newtheorem{mainthm}{Theorem}
\theoremstyle{definition}
\newtheorem{rem}[equation]{Remark}
\theoremstyle{remark}
\renewcommand{\theequation}{\thesection.\ifnum\value{subsection}=0 1\else \arabic{subsection}\fi.\arabic{equation}}
\DeclareMathOperator{\supp}{supp} 
\definecolor{aliceblue}{rgb}{0.92, 0.93, 1.0}
\newcounter{change}
\begin{document}

\title{Discontinuous observables as an obstruction for small essential spectral radius}

\author{Oliver Butterley}
\address{(Oliver Butterley) Department of Mathematics, University of Rome Tor Vergata, Via della Ricerca Scientifica 1, 00133 Roma, Italy}
\email{butterley@mat.uniroma2.it}
\urladdr{https://www.mat.uniroma2.it/butterley/}

\author{Daniel Smania}
\address{(Daniel Smania) Departamento de Matemática, Instituto de Ciências Matemáticas e de Computação (ICMC), Universidade de São Paulo (USP), Avenida Trabalhador São-carlense, 400, São Carlos, SP, CEP 13566-590, Brazil}
\email{smania@icmc.usp.br}
\urladdr{https://sites.icmc.usp.br/smania/}

\begin{abstract}
We show that for a very wide class of Banach spaces of functions on $[0,1]$ there are intrinsic lower bounds for the essential spectral radius of the transfer operator associated to piecewise smooth expanding maps.
The class of Banach spaces studied includes any reasonable space which permits discontinuities.
\end{abstract}

\subjclass[2020]{37A05, 37A25, 	37C30 }

\keywords{transfer operator, essential spectrum, spectral gap, piecewise expanding map, discontinuities}

\maketitle


\section{Introduction}

Given a chaotic system with some degree of hyperbolicity it is natural to investigate the statistical properties of the system.
This involves finding and studying relevant invariant measures, proving CLT, LLT, large deviations, estimating the decay of correlations, studying zeta functions, etc. 
Study of the transfer operators associated to dynamical systems is a very convenient and powerful way to investigate statistical properties.
Such an idea goes back at least to the use of the Koopman operator by von Neumann to prove the mean ergodic theorem.
Subsequently Sinai and others in the Russian school developed theory for the Koopman operator acting on \(L^2\) and the connection with ergodicity and mixing (see e.g., \cite{CFS82}).
Soon it was realised that it was useful to study the adjoint of the Koopman operator and this object became known as the transfer operator.
Amongst this period of development is the work of Lasota \& Yorke~\cite{LY73}, Ruelle~\cite{Ruelle89}, Keller \cite{Keller84,Keller89}, Baladi \& Keller~\cite{BK90}, Keller \& Liverani~\cite{KL99}.
See the books by Baladi~\cite{baladib,Baladi18} for a more complete history and the notes of Liverani~\cite{Liverani04} for an overview.
In many cases the use of transfer operators was done by reducing the system to symbolic dynamics and then using standard function spaces (see Bowen \cite{Bowen08} and Parry and Pollicott \cite{pp}).
On the other hand, in some cases one could choose dynamically sensible choices of Banach space on which to study the transfer operator without the need of coding and potential loss of information.
In the case of hyperbolic systems, as opposed to expanding systems, this led to the need of anisotropic Banach spaces of distributions to match with the distinctly different behaviour in different directions (see Blank, Keller, and  Liverani\cite{BKL02}, Gouëzel and Liverani  \cite{GL06} and Baladi and Tsujii \cite{BT07}).
At present there are many different Banach spaces available for studying many different piecewise expanding maps~\cite{besti} and the approach has shown many great successes.

The spectrum of the transfer operator will consist of essential spectrum and a set of isolated eigenvalues, The  spectrum depends on the choice of Banach space.
In order to be useful for studying the system the essential spectrum radius needs to be smaller than the spectral radius. 
For some systems, those with high degree of regularity, there exists a choice of Banach space so that the essential spectral radius is arbitrarily small. 
For example this has been shown for smooth expanding maps in any dimension (see Collet and Isola \cite{CI91},  Gundlach and  Latushkin \cite{GL03}) and pseudo Anosov diffeomorphisms by Faure, Gouëzel and  Lanneau \cite{FGL19}. 
This corresponds to being able to give a precise description of decay of correlations  in terms of resonances~\cite[Definition 1.1.]{FGL19}.
Choosing a Banach space which is larger than required can lead to the essential spectrum being large.
For instance, studying the transfer operator acting on \(L^p\) with \(1 \leq p < \infty\) an approximate eigenfunction argument can be used to show that the spectrum of the transfer operator is the entire disc, i.e., the essential spectrum is equal to the spectral radius~\cite[Footnote 8]{BCJ23}.
One direction of interest is to identify the isolated points of spectrum (see \cite{KR04,BKL22,Liverani01} and references within).
Indeed the isolated points of spectrum can be shown to be essentially independent on the choice of Banach space~\cite{BT08}.

The focus of this present work is in the other direction, on the essential spectrum and understanding if it can be reduced in a given situation.
Showing that the essential spectral radius is small is intimately connected with showing a large meromorphic extension of the zeta function (see, e.g., \cite{FLP94,Baladi18}). 

Given the huge availability of different Banach spaces and the unlimited creative possibility it is natural to ask if, in a given situation, there is a chance of finding a better Banach space in order to reduce the essential spectral radius. 
As hinted above, if the Banach space is too large the essential spectral radius is large.
On the other hand the Banach space must be sufficiently large in order to be useful, typically one would want it to include at least all smooth observables.
It was shown by B., Canestrari \& Jain~\cite{BCJ23} that, for the case of smooth interval maps with discontinuities, the essential spectrum is large for a very large class of observables (see \cite{BCC23} for a higher dimensional extension). 
The only requirement on the Banach space (apart from containing \(\mathcal{C}^\infty\) and being invariant under the dynamics) was that it had to be continuously embedded in \(L^\infty\).
This, as already noted in that work, is unfortunate since there are many spaces, for example Besov spaces (see Arbieto and S. \cite{as2020transfer}, S. \cite{besti}  and Nakano and Sakamoto \cite{ns})  and Sobolev spaces (see Thomine \cite{Thomine11})  which seem like a reasonable choices but are not embedded in \(L^\infty\).
Discontinuities are natural in physical systems (e.g. see Chernov and Markarian \cite{CM06}) but, additionally, it can be argued that some physically relevant observables are unbounded and such should therefore be permitted in the analysis.
Rectifying this gap is a major motivation in the present work.

Here, we will show that {\it for a broad class of Banach spaces of observables, satisfying fairly minimal conditions — in particular, allowing simple discontinuities in the observables — the action of the transfer operator has a large essential spectral radius.} These classes of Banach spaces are sufficiently general to include unbounded observables.

In Theorem~\ref{main} we give conditions which imply that Besov space is embedded in a given Banach space and also derive a lower estimate for the essential spectral radius.
Theorem~\ref{BB} applies to linear expanding maps and again gives a lower bound for the essential spectral radius but with weaker assumptions on the Banach space.
On the other hand, Theorem~\ref{NEW} requires an assumption of the topological pressure in order to give a lower bound (an assumption that holds for $C^\infty$ maps).
In Theorem~\ref{thm2} we apply the ideas to the case where the norm is \emph{natural} (defined in Section~\ref{sec:natural}) and we obtain a lower estimate on the essential spectral radius.

Let $I=[0,1]$ and 
$$T\colon \cup_i I_i \rightarrow I$$ where $\{ I_i\}_i$  is a finite partition of $I$ by open intervals, and $T\colon I_i \rightarrow T(I^i)$ extends to a $C^1$ diffeomorphism on $\overline{I}_i$.  Then the transfer operator $L$ with respect to the is a bounded operator acting on $L^1(I)$. Denote by $r(L,L^1(I))$ its spectral radius.    

\subsection{Growth of derivatives and topological entropy}

Let $$\mathcal{P}^k=\{I^k_i\}_i$$ be the intervals of monotonicity of $T^k$. Let $$\theta^k_i =\sup_{x\in I^k_i} \frac{1}{|Df^k(x)|}.$$
Notice that 
$\Theta^k(\beta) = \sum_i (\theta_i^k)^\beta$
is sub-multiplicative, $\Theta^{k+j}(\beta)\leq \Theta^{k}(\beta)\Theta^{j}(\beta)$,
so we can define 
$$\Theta^\infty(\beta) = \lim_k (\Theta^{k}(\beta))^{1/k}.$$ 

\begin{rem} An easy upper bound is 
$$\Theta^\infty(\beta)\leq \#\mathcal{P}^1 \lim_k \Big( \sup_{x\in I} \frac{1}{|Df^k(x)|^\beta}\Big)^{1/k}.$$
If $T$ is continuous one can be more precise  

$$\Theta^\infty(\beta)\leq e^{h_{top}(T)} \lim_k \Big(\sup_{x\in I} \frac{1}{|Df^k(x)|^\beta}\Big)^{1/k}.$$
Here $h_{top}(T)$ is the topological entropy of $T$.   If $T$ is an $C^{1+\beta}$  expanding map on the circle we observe that 
$$\lim_k \Big(\sup_{x\in I} \frac{1}{|Df^k(x)|^\beta}\Big)^{1/k}=e^{-\beta M},$$
where 
$$M= \inf_{\mu inv. prob. T} \int \ln |Df| \ d\mu.$$
Indeed in this case $$\Theta^\infty(\beta)=e^{P_{top}(-\beta \ln |Df|)},$$
where $P_{top}(\phi)$ denotes the topological pressure of $\phi$.
\end{rem}

\section{Main results} 
 \subsection{Lower bound for essential spectrum radius} Discontinuities of a  dynamical system is a serious obstruction for  small essential spectrum radius of the transfer operator $L$ acting on Banach spaces of bounded functions (see B., Canestrari and Jain~\cite{BCJ23}). 
Here we are going to see that if the transfer operator acts on a space of functions $B$ that contains the simplest of discontinuous functions, and satisfies a basic norm estimate, then $B$ is indeed quite large and its essential spectrum cannot be small. 

Given $s\in (0,1)$, let $B^s_{1,1}$ be the classical space of Besov function on the interval $I$. Denote by $|A|$ the Lebesgue measure of the set $A$.
 
\begin{mainthm} \label{main} Suppose that \begin{itemize}
\item[-] The map  $T\colon I_i \rightarrow T(I^i)$ extends to a $C^{1+\beta}$ diffeomorphism on $\overline{I}_i$, for every $i$, with $\beta > 0$. 
\item[-]  The Lebesgue measure $m$ on  $I$ is $T$-invariant.
\item[-] The transfer operator $L$ associated with $T$, with respect to the Lebesgue measure $m$, preserves a Banach space of functions $B$ that is continuously embedded in $L^1(I)$ and the operator $L \colon B \to B$ is bounded.
\item[-] There are $C\geq 0$ and $s \in (0,1)$ such that for every interval $J\subset I$ we have that $1_J\in B$ and 
\begin{equation} \label{estint}||1_J||_B\leq C |J|^{1-s}
\end{equation}
\end{itemize} 
Then the Besov space $B^s_{1,1}$ is continuously embedded in $B$.  If furthermore   $s <  \beta$ we have
$$r_{ess}(T,B)\geq 1/\Theta^\infty(1-s)$$
and every $\lambda \in \mathbb{C}$ with $|\lambda|< 1/\Theta^\infty(1-s)$ is an eigenvalue of $L$ on $B$ with an infinite dimensional eigenspace.
\end{mainthm}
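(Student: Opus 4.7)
The plan is to prove the two conclusions in sequence. For the continuous embedding $B^s_{1,1} \hookrightarrow B$, I would use the Haar wavelet atomic decomposition: every $f \in B^s_{1,1}$ decomposes as $f = \sum_{j,k} c_{j,k} h_{j,k}$ with $h_{j,k} = 1_{J_+} - 1_{J_-}$ a Haar atom supported on a dyadic interval of length $2^{-j}$, together with the norm equivalence $\|f\|_{B^s_{1,1}} \asymp \sum_{j,k} 2^{-j(1-s)}|c_{j,k}|$. Applying hypothesis \eqref{estint} to each of the two half-intervals of $h_{j,k}$ gives $\|h_{j,k}\|_B \leq 2C\cdot 2^{-(j+1)(1-s)}$, and summing yields $\|f\|_B \lesssim \|f\|_{B^s_{1,1}}$, so $B^s_{1,1}$ embeds continuously in $B$.

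For the spectral lower bound, note that because the inclusion $B^s_{1,1} \hookrightarrow B$ is compatible with the action of $L$ (both operators agree on $L^1$), it suffices to exhibit, for each $\lambda$ with $|\lambda| < 1/\Theta^\infty(1-s)$, an infinite-dimensional family of linearly independent $L$-eigenfunctions already inside $B^s_{1,1}$. Fix a small subinterval $J$ contained in the interior of a single initial partition element. Using the pointwise Lebesgue-invariance identity $\sum_i 1/|T'(\phi_i(y))| = 1$ (which is equivalent to $T$-invariance of $m$), I would first build a generator $h_0 \in \ker L$ of the schematic form $h_0 = 1_{\phi_{i_1}(J)} - w \cdot 1_{\phi_{i_2}(J)}$, where the distortion weight $w(x) = |T'(x)|/|T'(\phi_{i_1}(T(x)))|$ is $C^\beta$ by the $C^{1+\beta}$ hypothesis on $T$. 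Then, iteratively pulling $h_0$ back through all inverse branches of $T$, construct $h_k$ satisfying $L h_k = h_{k-1}$; hypothesis \eqref{estint} applied to each piece, together with bounded distortion, gives the growth estimate $\|h_k\|_{B^s_{1,1}} \lesssim \Theta^k(1-s)\,|J|^{1-s}$. Setting
$$g_{J,\lambda} = \sum_{k \geq 0} \lambda^k h_k,$$
the series converges absolutely in $B^s_{1,1}$ precisely when $|\lambda|\Theta^\infty(1-s) < 1$, and a telescoping computation using $L h_0 = 0$ and $L h_k = h_{k-1}$ yields $L g_{J,\lambda} = \lambda g_{J,\lambda}$. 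Taking $J$ from any countable family of disjoint subintervals of $I$ produces an infinite linearly independent family of such eigenfunctions, giving the infinite-dimensional eigenspace and therefore $r_{ess}(T,B) \geq 1/\Theta^\infty(1-s)$.

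The main technical obstacle is the control of the distortion weight $w$ (and its iterates along the preimage tree) in the $C^{1+\beta}$, non-affine setting. In the piecewise affine case the identities $L h_0 = 0$ and $L h_k = h_{k-1}$ are exact and the $h_k$ are genuine step functions; in the non-affine case the pullbacks carry $y$-dependent weights whose oscillation at scale $|J|$ is of order $|J|^\beta$, contributing extra Besov-norm terms of order $|J|^\beta \cdot \|1_J\|_{B^s_{1,1}} \lesssim |J|^{1-s+\beta}$ per scale. The hypothesis $s < \beta$ is precisely what is needed to ensure that these distortion contributions are of strictly lower order than the $|J|^{1-s}$ scale dictated by hypothesis \eqref{estint}, so that they can be absorbed — most naturally via a Neumann-series or contraction-mapping argument which rebuilds $h_0$ and the recursion $L h_k = h_{k-1}$ modulo lower-order errors — without disrupting the main $\Theta^k(1-s)$ growth rate of the $h_k$. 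This is the step where the regularity of $T$ meets the roughness allowed by the norm on $B$, and closing it rigorously is the crux of the proof.
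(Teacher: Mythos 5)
Your overall architecture is the right one, and for the embedding the Haar route is a defensible (if indirect) alternative to the paper's use of the de Souza atomic decomposition \eqref{rep}, under which the estimate is immediate: if $\psi = \sum_n c_n |Q_n|^{s-1}1_{Q_n}$ then \eqref{estint} gives $\|\psi\|_B \leq C\sum_n|c_n|$ and one takes the infimum.

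For the spectral bound, however, you have misdiagnosed where the difficulty actually sits, and as a result you declare a gap that isn't there and leave the real one unclosed. Since Lebesgue measure is $T$-invariant, $L(\phi\circ T)=\phi$ holds \emph{exactly}, so with $h_k := h_0\circ T^k$ the recursion $Lh_k = h_{k-1}$ is exact in the $C^{1+\beta}$ case too — there are no lower-order distortion errors to absorb, and no Neumann-series or contraction-mapping repair is needed. What does need to be controlled is the $B^s_{1,1}$-norm of $h_0\circ T^k = \phi_1\circ T^k\,1_{T^{-k}J_1} - \phi_2\circ T^k\,1_{T^{-k}J_2}$, where the $\phi_i$ are $\beta$-H\"older weights; the point you are missing is that the $p$-variation (with $1/p=\beta>s$) is reparametrization-invariant (Lemma~\ref{prop}), so $v_{1/\beta}(\phi_i\circ T^k, Q^k_i) \leq v_{1/\beta}(\phi_i, J_i)$ is \emph{uniformly bounded in $k$} — no accumulation along the preimage tree — and the paper's Proposition~\ref{esti} then converts this $p$-variation bound into the Besov bound $|\psi\circ T^k|_{B^s_{1,1}}\lesssim \sum_i(\theta^k_i)^{1-s}=\Theta^k(1-s)$. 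That estimate, not a perturbative absorption of distortion, is the crux, and it is precisely what your sketch stops short of. You also do not address why $g_{J,\lambda}\neq 0$; the paper handles this via the $L^2$-orthogonality of the family $\{\psi\circ T^\ell\}_\ell$ for $\psi\in\ker L$. So while your top-level plan matches the paper's, the technical heart — the $p$-variation/Besov estimate and the nonvanishing of the eigenfunction — is missing, and the alternative route you gesture at (absorbing supposed errors in the cohomological recursion) is directed at a problem that the exact identity $L(\phi\circ T)=\phi$ already removes.
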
 

In the piecewise linear case, we can weaken the assumption and still obtain a lower bound for the essential spectrum radius.

\begin{mainthm}\label{BB}
Suppose that 
\begin{itemize}
\item[-]  The Lebesgue measure $m$ on  $I$ is $T$-invariant.
\item[-]  $T$ is linear on each branch and it has $k$ branches. 
\item[-]The transfer operator $L$ associated with $T$, with respect to the Lebesgue measure $m$, preserves a Banach space of functions $B$ that is continuously embedded in $L^1(I)$ and the operator $L \colon B \to B$ is bounded. 
\item[-] There is $C\geq 0$ such that for every interval $J\subset I$ we have that $1_J\in B$ and 
\begin{equation} \label{estint2}||1_J||_B\leq C.
\end{equation}
\end{itemize} 
Then 
$$
r_{ess}(L,B)\geq 1/k
$$
and every $\lambda \in \mathbb{C}$ which satisfies $|\lambda|< 1/k$ is an eigenvalue of $L$ on $B$ with an infinite dimensional eigenspace.
\end{mainthm}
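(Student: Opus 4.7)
The plan is to construct, for every $\lambda$ with $|\lambda|<1/k$, an infinite-dimensional subspace of $B$ consisting of eigenvectors of $L$ at eigenvalue $\lambda$; since no point of this open disc can then be an isolated eigenvalue of finite multiplicity, this will force $r_{ess}(L,B)\ge 1/k$. Two basic ingredients drive the construction. First, $T$-invariance of Lebesgue measure is equivalent to $L\mathbf{1}=\mathbf{1}$, which yields the cocycle identity $L(h\circ T^n)=h\circ T^{n-1}$ for every $h\in L^1$ and every $n\ge 1$. Second, because $T$ has $k$ branches, $T^n$ has at most $k^n$ monotonicity branches, so $T^{-n}(J)$ is a union of at most $k^n$ pairwise disjoint intervals for every interval $J\subset I$.

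\emph{Step 1: null vectors.} I build an infinite-dimensional subspace $V\subset \ker(L|_B)$ of functions that are finite linear combinations of indicators of intervals. Writing $s_j$ for the absolute slope of $T|_{I_j}$ and $T_j^{-1}\colon T(I_j)\to I_j$ for the branch inverse, the identity $L\mathbf{1}=\mathbf{1}$ reads $\sum_{j\colon y\in T(I_j)}s_j^{-1}=1$ pointwise. Fix a subset $S$ of at least two branches whose images share a common subinterval $A_0\subset\bigcap_{j\in S}T(I_j)$ (in the full-branch case one may take $S=\{1,\dots,k\}$ and $A_0=I$). For any subinterval $A\subset A_0$ and any $(c_j)_{j\in S}\in\mathbb{C}^S$ with $\sum_{j\in S}c_j/s_j=0$, set
\[
v_{A,(c_j)}:=\sum_{j\in S}c_j\,\mathbf{1}_{T_j^{-1}(A)}.
\]
Each summand is an interval-indicator, so $v_{A,(c_j)}\in B$ with $\|v_{A,(c_j)}\|_B\le |S|\max_j|c_j|\,C$ by \eqref{estint2}, and a direct computation gives $Lv_{A,(c_j)}=\mathbf{1}_A\sum_{j\in S}c_j/s_j=0$. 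Letting $A$ run over a sequence of disjoint subintervals of $A_0$ produces an infinite-dimensional $V$.

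\emph{Step 2: eigenfunction series.} Each $v\in V$ is a sum of a bounded number of interval-indicators, so $v\circ T^n$ is a sum of $O(k^n)$ interval-indicators, and \eqref{estint2} yields $\|v\circ T^n\|_B\le M_v\, k^n$ with $M_v$ independent of $n$. Consequently, for any $\lambda$ with $|\lambda|<1/k$ the series $v_\lambda:=\sum_{n\ge 0}\lambda^n\, v\circ T^n$ converges absolutely in $B$. Using $Lv=0$ together with the cocycle identity,
\[
Lv_\lambda\;=\;Lv+\sum_{n\ge 1}\lambda^n\, v\circ T^{n-1}\;=\;\lambda\, v_\lambda,
\]
and the identity $v=v_\lambda-\lambda\, v_\lambda\circ T$ makes the assignment $v\mapsto v_\lambda$ injective on $V$, so the $\lambda$-eigenspace inherits the infinite-dimensionality of $V$. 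I expect the main obstacle to be Step~1: producing enough interval-indicator null vectors in non-Markov situations, which can be handled by localizing $A$ inside a common-image subinterval of suitable branches or, if need be, by passing to an iterate $T^N$. Once this is in place the remainder is an essentially algebraic verification using only the cocycle identity and the combinatorial bound on preimages.
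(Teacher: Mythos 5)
Your proof is correct and follows essentially the same route as the paper: build interval-indicator functions in $\ker L$ (exploiting linearity of the branches and $L\mathbf{1}=\mathbf{1}$), form the geometric series $\sum_n \lambda^n\, v\circ T^n$ which converges in $B$ because $v\circ T^n$ is a sum of at most $O(k^n)$ interval-indicators each of $B$-norm $\le C$, and read off the eigenvalue equation from the cocycle identity. You are somewhat more explicit than the paper about producing an infinite-dimensional family of kernel elements and about the injectivity of $v\mapsto v_\lambda$ (the paper delegates the infinite-dimensionality to the argument in the proof of Theorem~\ref{main}), and both arguments share the same implicit assumption that at least two branch images overlap on a set of positive measure — which, as you note, is where the real constraint lies, and is automatic whenever the map is genuinely expanding.
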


\begin{mainthm}\label{NEW}
Suppose that 
\begin{itemize}
\item[-]  The Lebesgue measure $m$ on  $I$ is $T$-invariant.
\item[-]  $T$ is piecewise $C^{r+1}$ and expanding, for some $r> 0$,  Markovian with $k$ branches, all of them onto,  and
\begin{equation}\label{hipp} P_{top}(-(r+1)\log |DT|)< \frac{1}{k}. \end{equation} 
\item[-]The transfer operator $L$ associated with $T$, with respect to the Lebesgue measure $m$, preserves a Banach space of functions $B$ that is continuously embedded in $L^1(I)$ and the operator $L \colon B \to B$ is bounded. 
\item[-] There is $C\geq 0$ such that for every interval $J\subset I$ we have that $1_J\in B$ and 
\begin{equation} \label{estint3}||1_J||_B\leq C.
\end{equation}
\end{itemize} 
Then 
$$r_{ess}(L,B)\geq \frac{1}{k}.$$
\end{mainthm}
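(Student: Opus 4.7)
My plan is to extend the Haar-function construction used in the piecewise linear Markov case (Theorem~\ref{BB}) to the $C^{r+1}$ setting, using the topological pressure bound to absorb the distortion error. For each $m \geq 0$ and each cylinder $J^m_j \in \mathcal{P}^m$, define a Haar-like function $h_{m,j}$ that is $\pm 1$ on the $k$ Markov sub-cylinders of $J^m_j$ and has mean zero on $J^m_j$. The hypothesis $\|1_J\|_B \leq C$ gives $\|h_{m,j}\|_B \leq 2C$, and hence $\|H_m\|_B \leq 2Ck^m$ for the global Rademacher-type function $H_m = \sum_j h_{m,j}$. In the piecewise linear case of Theorem~\ref{BB}, Lebesgue-invariance and the constant Jacobian per branch yield $L H_m = H_{m-1}$ exactly, so $f_\lambda = \sum_{m \geq 0} \lambda^m H_m \in B$ is an honest eigenfunction of $L$ with eigenvalue $\lambda$ for every $|\lambda| < 1/k$ (the series converges since $(k|\lambda|)^m$ is summable).

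For a $C^{r+1}$ Markov map, the identity becomes $L H_m = H_{m-1} + E_m$, where $E_m$ captures the variation of $1/|DT|$ across sibling sub-cylinders. Taylor-expanding the inverse branches to order $r+1$ (possible because $T$ is $C^{r+1}$), the pointwise error on each cylinder $J^m_j$ is of order $|J^m_j|^{r+1}$ times a bound on the $(r+1)$-st derivative data; this yields $\|E_m\|_B \lesssim C \sum_j |J^m_j|^{r+1}$ using only the uniform characteristic-function bound from the hypothesis. By the definition of topological pressure, $\sum_j |J^m_j|^{r+1}$ grows at exponential rate $\exp\bigl(m \cdot P_{top}(-(r+1)\log|DT|)\bigr)$, and the hypothesis \eqref{hipp} keeps this growth tame enough that the truncated functions $f_{\lambda, N} = \sum_{m=0}^N \lambda^m H_m$ satisfy $\|(L - \lambda) f_{\lambda, N}\|_B \to 0$ for every $|\lambda| < 1/k$, while $\|f_{\lambda, N}\|_B$ stays bounded.

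Finally, the construction can be localized on arbitrarily small base cylinders — each cylinder of $\mathcal{P}^n$ supports its own independent family of local Rademacher functions, whose images under the $L$ iterations remain essentially confined to that cylinder's orbit. This produces infinitely many linearly independent approximate eigenfunctions at any prescribed $|\lambda| < 1/k$, so by the singular-sequence characterization of the essential spectrum, every such $\lambda$ lies in $\sigma_{ess}(L, B)$, and hence $r_{ess}(L, B) \geq 1/k$. The main obstacle is controlling $\|E_m\|_B$: because the hypothesis provides no smallness of $\|1_J\|_B$ for shrinking intervals (only the uniform bound $C$), all the decay must come from the Taylor factors $|J^m_j|^{r+1}$. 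Aligning the $(r+1)$-th-order Taylor remainder with the $(r+1)$-power in the pressure exponent is precisely what forces the $C^{r+1}$ regularity assumption and the pressure bound \eqref{hipp} to appear together.
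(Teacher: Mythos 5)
Your proposal is structurally quite different from the paper's proof and, as written, has a gap I don't see how to close. Let me first flag a misdiagnosis of where the error actually lives. The forward series used in this family of arguments is $\sum_\ell z^\ell \, \psi\circ T^\ell$ for a \emph{fixed} $\psi$ that is piecewise constant on a fixed Markov partition $\mathcal{P}^{n_0}$. Because Lebesgue measure is $T$-invariant, $L(\psi\circ T^\ell)=\psi\circ T^{\ell-1}$ holds \emph{exactly} for every $\ell\geq 1$, with no distortion error at all. The only obstruction is at level zero: one cannot in general arrange $L\psi=0$ once the branches are nonlinear (unlike Theorem~\ref{BB}), so the series satisfies $L h_z = z h_z + L\psi$ with a single residual term. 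Your decomposition $LH_m = H_{m-1}+E_m$, with an error $E_m$ at every level $m$, arises only because you impose mean zero of $H_m$ on each cylinder of $\mathcal{P}^m$, which is not the relation $H_m=\psi\circ T^m$; and it converts a single, concentrated correction problem into infinitely many, each of which has to be estimated in $\|\cdot\|_B$.

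That brings up the gap I think is fatal to the argument as posed. The hypothesis on $B$ gives you control only of $\|1_J\|_B$ for intervals $J$: there is no bound on the $B$-norm of a generic piecewise-$C^r$ function. Your $E_m$ (and likewise the residual $L\psi$) is not a finite combination of characteristic functions of intervals — it carries the smooth Jacobian weight $1/|DT^m|$ — so the claimed estimate $\|E_m\|_B \lesssim C\sum_j |J^m_j|^{r+1}$ does not follow from the hypotheses. You would need to write $E_m$ as an $\ell^1$-convergent sum of indicators and control the coefficients, and nothing in the assumptions supplies that. Relatedly, the proposal never explains why $\|f_{\lambda,N}\|_B$ stays bounded \emph{away from zero} (only ``stays bounded''), which is the other half of a singular-sequence argument. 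The paper sidesteps both problems by a contradiction argument: assume $r_{ess}(L,B)<1/k$, split $C^r(I)=E\oplus F$ using Collet--Isola ($r_{ess}(L,C^r)=\exp P_{top}(-(r+1)\log|DT|)<1/k$) and $B=\hat E\oplus\hat F$ using the assumed gap, and choose a non-constant piecewise-constant $\psi$ with $\psi\in\hat F$ and $L^{n_0}\psi\in F$. Then the backward correction $w_{z,n}=\sum_{\ell\geq1}z^{-\ell n}L^{\ell n}\psi$ converges in both $B$ and $C^r$ precisely because of those two tame-subspace memberships — this is the real role of the pressure hypothesis, not absorbing Taylor remainders in a forward series. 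Nonvanishing of the eigenfunction $h_{z,n}+w_{z,n}$ is then secured by a genuinely geometric argument (the image of $h_{z,n}$ is a Cantor set while the image of $-w_{z,n}$ is an interval, so they cannot cancel), which has no counterpart in your proposal. I would suggest rebuilding the argument around a single fixed $\psi$, the one-step residual $L\psi$, and the question of why the correction series converges in $B$, rather than around distortion errors at every scale.
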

\begin{rem} Condition (\ref{hipp}) is satisfied 
\begin{itemize}  
\item[-] for every $T$ that is piecewise expanding  $C^{r+1}$ and    Markovian with $k$ branches, and
$$\sup_{x\in I} \frac{1}{|DT(x)|^r} < \frac{1}{k}.$$
\item[-] for every $T$  that is piecewise expanding $C^{\infty}$ and    Markovian with $k$ branches, provided $r$ is large enough.
\end{itemize} 
\end{rem} 

\begin{rem} Every $C^r$ Markovian  expanding map, $r > 1$,  with only full branches, is conjugate by a $C^r$ conjugacy with a $C^r$ Markovian  expanding map that preserves the Lebesgue measure, so this assumption is not strong. 
\end{rem} 

\begin{rem} Note that if $L\phi =\lambda \phi$ with $\phi \in B\setminus \{0\}$ and $\psi(x) = \overline{\phi}(x)/|\phi(x)|$ if $\phi(x)\neq 0$, and $\psi(x)=0$ otherwise, then
$$\int \psi\circ T^n \phi  \ dm = \lambda^n \int |\phi| \ dm, $$
so $\lambda$-eigenvectors are obstructions for decay of correlations faster than $|\lambda|$. 
\end{rem}

\subsection{Natural spaces of functions} 
\label{sec:natural}
Conditions (\ref{estint}) and (\ref{estint2}) appear challenging to verify. However, most Banach function spaces on $\mathbb{R}$ discussed in the literature exhibit good behaviour with respect to translation and scaling, which will significantly aid our analysis. \\

\noindent {\bf Almost Homogeneity and invariance by translations.} 
We will say that a pseudo-norm $n(\cdot)$ on a   space of functions on the interval $I$  is {\it purely natural} if there is $t \in \mathbb{R}$ and $C> 0$  such that  if $u \colon \mathbb{R} \rightarrow \mathbb{R}$ is  an invertible affine transformation and $\phi\colon I \rightarrow \mathbb{C}$ satisfies 
$$u^{-1}(\supp \ \phi)\subset I$$
then $\phi \circ u \in  B$ and 
$$\frac{1}{C} |u'|^t 
 n(\phi)\leq n(\phi\circ u)\leq C |u'|^t n(\phi).$$

 \ \\
The parameter $t$ will be called the degree of homogeneity of $u$. 

A pseudo-norm $n$ is called {\it natural} it is a finite sum of purely natural pseudo-norms, that is, there are purely natural pseudo-norm $n_i$, with $i\leq j$, and degree of homogeneity $t_i$ such that 
\begin{equation}\label{pn}  n(\phi)=\sum_{i\leq j} u_i(\phi).\end{equation} 

\begin{rem} Many norms and pseudo-norms  of spaces of function of an interval are natural. The sup norm, the $L^p$ norms, the $p$-bounded variation pseudo-norm, the H\"older norm, $C^k$ norms, Sobolev norms and Besov norms. 
\end{rem}

A nice application of the previous results is 

\begin{mainthm}\label{thm2} 
Suppose that 
\begin{itemize}
\item[-]  $T$ is piecewise $C^{r+1}$ and expanding acting on the interval $I=[0,1]$, for some $r> 0$,  Markovian with $k$ branches, all of them onto,  and
\begin{equation}\label{hipp2} P_{top}(-(r+1)\log |DT|)< \frac{1}{k}. \end{equation} 
\item[-]  The Lebesgue measure $m$ on  $I$ is $T$-invariant.
\item[-]  $B$ is  a Banach space of functions continuously embedded in $L^1(I)$ whose  norm $||\cdot||_B$ is natural. 
 \item[-] The transfer operator $L$ of $T$ with respect to the Lebesgue measure $m$ keeps a Banach space of functions $B$ invariant, and $L\colon B \rightarrow B$ is a bounded operator with spectral gap.
\item[-] For every interval $J\subset I$ we have that $1_J\in B$.
\end{itemize} 
Then one of the following cases occurs 
\begin{itemize}
\item[I.] There is $C > 0$ such that 
$$\frac{1}{C}\leq   ||1_P||_B\leq C  $$
for every interval $P\subset I$, and 
$$r_{ess}(L,B)\geq 1/k.$$
\item[II.]
There is $s\in (0,1)$ and $C> 0$ such that   
$$\frac{1}{C}|P|^{1-s}\leq ||1_P||_B\leq C |P|^{1-s}$$
for every interval $P \subset I$, the Besov space $B^s_{1,1}$ is continuously embedded in $B$ and 
$$r_{ess}(L,B)\geq \frac{1}{\Theta^\infty(1-s)}.$$
\end{itemize}
\end{mainthm}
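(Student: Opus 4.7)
The plan is to exploit the naturalness of $\|\cdot\|_B$ to pin down the exponent governing $\|1_P\|_B$ as a function of $|P|$, and then reduce to Theorems~\ref{main} and~\ref{NEW}. Decompose $\|\cdot\|_B = \sum_{i \leq j} n_i$ into purely natural pseudo-norms of degrees $t_i$. For an interval $P = [a, a+|P|] \subset I$, the affine map $u(x) = (x-a)/|P|$ satisfies $u^{-1}(I) = P \subset I$, $1_I \circ u = 1_P$ and $|u'| = 1/|P|$, so the scaling estimate for purely natural pseudo-norms gives
\begin{equation*}
n_i(1_P) = n_i(1_I \circ u) \asymp |u'|^{t_i} n_i(1_I) = |P|^{-t_i} n_i(1_I)
\end{equation*}
uniformly over $P \subset I$. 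Setting $c_i := n_i(1_I) \geq 0$ and $t_* := \max\{t_i : c_i > 0\}$, summation yields $\|1_P\|_B \asymp |P|^{-t_*}$ uniformly in $P$, up to constants depending only on the number $j$ of summands.

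The key step is to locate $t_*$ in $(-1, 0]$. The continuous embedding $B \hookrightarrow L^1(I)$ gives $\|1_P\|_B \geq c\|1_P\|_{L^1} = c|P|$, so $|P|^{-t_*} \geq c'|P|$ for $|P| \in (0, 1]$, forcing $t_* \geq -1$. For the upper bound $t_* \leq 0$ I would combine the spectral gap with the Markov full-branch structure. Fixing a cylinder $C_n$ of level $n$, bounded distortion (from $T$ being $C^{r+1}$) shows that $L^n 1_{C_n}$ is close to $|C_n|\cdot 1_I$, so spectral gap yields
\begin{equation*}
\|L^n 1_{C_n} - |C_n|\cdot 1_I\|_B \leq C\lambda^n \|1_{C_n} - |C_n|\cdot 1_I\|_B \asymp C\lambda^n |C_n|^{-t_*},
\end{equation*}
while an independent lower bound on the left-hand side from the explicit form of $L^n 1_{C_n}$ and the $L^1$ embedding, compared against this exponential rate in $n$, forces $t_* \leq 0$. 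The borderline case $t_* = -1$, for which $\|1_P\|_B \asymp |P|$, makes $B$ behave like $L^1$ on indicator functions and is excluded by the spectral gap hypothesis (no gap exists on $L^1$).

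With $t_* \in (-1, 0]$ established the dichotomy is immediate. If $t_* = 0$, then $\|1_P\|_B$ is bounded above and below by positive constants: this is Case~I, hypothesis~\eqref{estint3} is satisfied, and Theorem~\ref{NEW} yields $r_{ess}(L, B) \geq 1/k$. If $t_* \in (-1, 0)$, set $s := 1 + t_* \in (0, 1)$, so $\|1_P\|_B \asymp |P|^{1-s}$; this is Case~II, hypothesis~\eqref{estint} of Theorem~\ref{main} holds, and Theorem~\ref{main} (applied with the H\"older exponent associated to the $C^{r+1}$ smoothness of $T$, which may be taken large enough that $s < \beta$) provides both the continuous embedding $B^s_{1,1} \hookrightarrow B$ and the bound $r_{ess}(L, B) \geq 1/\Theta^\infty(1-s)$. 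The main obstacle in the plan is the upper bound $t_* \leq 0$: for piecewise-linear $T$, $L^n$ annihilates the natural test function $1_{C_n} - |C_n| \cdot 1_I$, so a more refined choice of test functions (for instance differences between neighboring cylinders that survive $L^n$ thanks to $C^{r+1}$ nonlinearity) is needed to extract a nontrivial constraint from the spectral gap.
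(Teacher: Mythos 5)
Your reduction to Theorems~\ref{main} and~\ref{NEW} via a scaling exponent $t_*$ with $\|1_P\|_B \asymp |P|^{-t_*}$ is exactly the paper's Claim~I, and the final dichotomy (apply Theorem~\ref{NEW} if $t_* = 0$, Theorem~\ref{main} with $s = 1 + t_*$ if $t_* \in (-1,0)$) also matches. The difference, and where your plan has genuine gaps, is in how the two inequalities $t_* > -1$ and $t_* \leq 0$ are established: you have the roles of the spectral gap and the norm-scaling estimate essentially swapped relative to the paper, and neither of your two arguments closes.

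For $t_* \leq 0$ you try to use the spectral gap via $L^n 1_{C_n}$, and you correctly flag that this breaks down: for piecewise-linear $T$ the function $1_{C_n} - |C_n|1_I$ is annihilated by $L^n$, so the spectral gap gives no information, and invoking ``$C^{r+1}$ nonlinearity'' does not rescue the estimate in general. The paper's Claim~IV avoids dynamics entirely. It is a pure triangle-inequality argument using only the scaling law from Claim~I: take adjacent intervals $Q_0, Q_1, \dots, Q_k$ with $|Q_i|$ a fast geometric sequence, note that $\|1_{[0,x_{k+1}]}\|_B$ is uniformly bounded while $\|1_{Q_k}\|_B \asymp |Q_k|^{-t_*}$ would blow up geometrically if $t_* > 0$, and compare via
$\|1_{[0,x_{k+1}]}\|_B \geq \|1_{Q_k}\|_B - \sum_{i<k}\|1_{Q_i}\|_B$.
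No spectral gap, no Markov structure, no distortion estimates are needed.

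For the lower bound you get only $t_* \geq -1$ from the $L^1$ embedding, and then try to exclude $t_* = -1$ by appealing to the fact that $L$ has no spectral gap on $L^1$. That is not a proof: $\|1_P\|_B \asymp |P|$ says nothing about $B$ itself being (or embedding onto a dense copy of) $L^1$, so the absence of a gap on $L^1$ does not transfer. What the paper actually does (Claim~II then Claim~III) is the spectral-gap argument, but applied to the right test functions: take $a_P = 1_{Q_1}/|Q_1| - 1_{Q_2}/|Q_2|$ with $Q_1, Q_2$ distinct sub-cylinders of a level-$(k+1)$ cylinder $P$, so that $\int a_P\,dm = 0$. The spectral gap forces $\|L^k a_P\|_B \leq C\lambda^k \|a_P\|_B$, while the $L^1$ embedding combined with the identity $\|L^k a_P\|_{L^1} = \|a_P\|_{L^1} = 2$ forces $\|L^k a_P\|_B \geq c > 0$. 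Hence $\|a_P\|_B \gtrsim \lambda^{-k}$, and since $|Q_i| \geq \alpha^{k+1}$ one extracts $\|1_Q\|_B \gtrsim |Q|^{1-\beta}$ with $\beta = \ln\lambda/\ln\alpha > 0$, which yields the strict bound $t_* \geq -1 + \beta > -1$. In short, your Claim~I and the final reduction are correct and match the paper, but the two intermediate exponent bounds need to be reorganised: the spectral gap should be used for the strict lower bound $t_* > -1$ (with mean-zero test functions so that it bites), and the upper bound $t_* \leq 0$ should be proved by the elementary telescoping argument rather than by a dynamical estimate.
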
 

\begin{rem} 
Both cases occur. If $B$ represents the space of bounded variation functions on $I$ and $T(x) = 2x \mod 1$ on $I = [0,1]$, then we are in Case I. On the other hand, if we consider $B^s_{1,1}$ with $s \in (0,1)$, we are in Case II (see Nakano and Sakamoto \cite{ns} and S. \cite{besti}). Furthermore, note that if $B \subset L^\infty$, we must be in case I, as $B^s_{1,1}$ includes unbounded functions.
\end{rem} 

\section{Preliminaries}

\subsection{$p$-bounded variation} 
Let $J=[a,b]\subset I$. The $p$-variation of a function $\psi\colon I \rightarrow \mathbb{C}$  on the interior of $J$ is
$$v_p(\psi,J)= \sup (\sum_{i=0}^n  |\phi(x_{i+1})-\phi(x_i)|^p)^{1/p}  ,$$
where the sup runs over all possible finite increasing sequences
$$a< x_0 < x_1 < \cdots < x_{n-1} < x_n < b.$$
\begin{lemma} \label{prop} The pseudo-norm $v_p$ has the following properties properties
\begin{itemize}
\item $v_p$ is invariant with respect to continuous change of coordinates: if $u\colon P\rightarrow J$ is a homeomorphism then 
$$v_p(\phi,J)= v_p(\phi\circ u ,P).$$
\item  if  $J_1$ and $J_2$ are intervals  such that $J_1\cap J_2$ is just a point then 
$$v_p^p(\psi,J_1)+ v_p^p(\psi,J_2)\leq v_p^p(\psi,J_1\cup J_2).$$
\end{itemize}
\end{lemma}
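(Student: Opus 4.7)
The plan is to prove both bullet points directly from the definition of $v_p$ as a supremum over finite increasing sequences of interior points, exploiting two elementary observations: first, that a homeomorphism between intervals is automatically strictly monotone and therefore induces a bijection between admissible partitions; second, that the concatenation of partitions of $J_1$ and $J_2$ yields an admissible partition of $J_1\cup J_2$ whose associated sum dominates the sum of the two pieces.

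For the first property, I would fix a homeomorphism $u\colon P\rightarrow J$. Since $u$ is continuous and injective on an interval, it is strictly monotone. If $u$ is increasing, then for any admissible sequence $a<x_0<\cdots<x_n<b$ in the interior of $P$, the image $u(a)<u(x_0)<\cdots<u(x_n)<u(b)$ is an admissible sequence in the interior of $J$, and the differences $|\phi(u(x_{i+1}))-\phi(u(x_i))|$ equal the differences one would compute for $\phi$ on $J$. Since $u$ is a bijection of interiors, this correspondence is onto, so the two suprema coincide. If $u$ is decreasing, the image sequence is decreasing; reversing its order gives an admissible increasing partition, and since the sum $\sum|\phi(y_{i+1})-\phi(y_i)|^p$ is unchanged by reversal, the same bijection argument applies.

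For the second property, write $J_1=[a,b]$ and $J_2=[b,c]$. Given partitions $a<x_0<\cdots<x_n<b$ and $b<y_0<\cdots<y_m<c$, the concatenation $a<x_0<\cdots<x_n<y_0<\cdots<y_m<c$ is an admissible sequence in the interior of $J_1\cup J_2$, because $x_n<b<y_0$. The associated $p$-th power sum for the concatenation equals
$$\sum_{i=0}^{n-1}|\psi(x_{i+1})-\psi(x_i)|^p + |\psi(y_0)-\psi(x_n)|^p + \sum_{j=0}^{m-1}|\psi(y_{j+1})-\psi(y_j)|^p,$$
which, by nonnegativity of the middle term, is at least the sum of the two separate sums. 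Hence the sum of the two sums is bounded by $v_p^p(\psi,J_1\cup J_2)$. Taking suprema over each partition independently, one at a time, yields $v_p^p(\psi,J_1)+v_p^p(\psi,J_2)\leq v_p^p(\psi,J_1\cup J_2)$.

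Neither step presents a real obstacle; the lemma is routine book-keeping once one notes that the definition uses strict interior inequalities and that homeomorphisms of intervals are monotone. The only thing to take mild care with is the decreasing case in the first item and the fact that partitions do not include the endpoints in the second, but both are handled as above.
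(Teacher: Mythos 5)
Your proof is correct, and since the paper states Lemma~\ref{prop} without giving a proof, there is nothing to compare it against; what you wrote is the natural argument one would supply. Both parts are handled properly: in the first you correctly use that a homeomorphism of intervals is strictly monotone and maps interior to interior, giving a bijection of admissible partitions, with the decreasing case reduced to the increasing one by reversal (which leaves $\sum|\phi(x_{i+1})-\phi(x_i)|^p$ unchanged); in the second you correctly note that concatenation of interior partitions of $J_1=[a,b]$ and $J_2=[b,c]$ is an admissible interior partition of $[a,c]$, that the cross term $|\psi(y_0)-\psi(x_n)|^p$ is nonnegative, and that taking suprema independently then yields the superadditivity $v_p^p(\psi,J_1)+v_p^p(\psi,J_2)\leq v_p^p(\psi,J_1\cup J_2)$.
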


\subsection{ Besov space $B^s_{1,1}(I)$} Given $s \in (0,1)$, consider the space of all  functions $\psi\in L^1(I)$ that can be written as 

\begin{equation}\label{rep} \psi =\sum_{n=0}^\infty c_n |Q_n|^{s-1}1_{Q_n},\end{equation} 
where this series converges in $L^1(I)$ and $Q_n$ are subintervals of $I$. If we endow $B^s_{1,1}$ with the norm 
$$|\phi|_{B^s_{1,1}(I)}=\inf \sum_n |c_n|, $$
where the infimum runs over all possible representations, $B^s_{1,1}$ is  a Banach space.   

Those spaces were introduced by de Souza \cite{souza-atomicdec}. There are indeed many way to describe this space (see de Souza \cite{souza2}). In particular it coincides with  the classical Besov spaces $B^s_{1,1}(I)$. The proof of the following proposition is quite simple.

\begin{prop} Let $B$ be as in the Theorem \ref{main}. There is a continuous embedding 
of $B^s_{1,1}$ in $B$, that is, $B^s_{1,1}\subset B$ and there is $C$ such that 
$$||\phi||_{B}\leq C||\phi||_{B^s_{1,1}}.$$
\end{prop}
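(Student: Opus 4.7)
The plan is to exploit the atomic decomposition \eqref{rep} of $B^s_{1,1}$ directly together with the hypothesis \eqref{estint}. Since the Besov norm is defined as an infimum over such decompositions and each ``atom'' is (up to a constant) the indicator of an interval, the assumed estimate on $\|1_J\|_B$ is perfectly matched to the normalization $|Q_n|^{s-1}$ of the atoms.

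First, given $\phi \in B^s_{1,1}(I)$ and any admissible representation $\phi = \sum_{n=0}^\infty c_n |Q_n|^{s-1} 1_{Q_n}$ converging in $L^1(I)$, I would estimate each term in $B$ using \eqref{estint}:
$$\bigl\| c_n |Q_n|^{s-1} 1_{Q_n} \bigr\|_B \leq |c_n|\, |Q_n|^{s-1}\, C |Q_n|^{1-s} = C |c_n|.$$
Hence the partial sums $S_N = \sum_{n=0}^N c_n |Q_n|^{s-1} 1_{Q_n}$ satisfy $\|S_N - S_M\|_B \leq C \sum_{n=M+1}^N |c_n|$, which is Cauchy because $\sum_n |c_n| < \infty$ for any admissible representation. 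Since $B$ is a Banach space, $S_N$ converges to some $\psi \in B$ with $\|\psi\|_B \leq C \sum_n |c_n|$.

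Second, I need to identify $\psi$ with $\phi$. This is where the assumption that $B$ embeds continuously in $L^1(I)$ is used: the convergence $S_N \to \psi$ in $B$ implies $S_N \to \psi$ in $L^1(I)$, while by the choice of representation $S_N \to \phi$ in $L^1(I)$. Uniqueness of $L^1$ limits gives $\psi = \phi$ as elements of $L^1$ (and hence of $B$, under the standard identification coming from the continuous embedding). Taking the infimum over admissible representations yields $\|\phi\|_B \leq C \|\phi\|_{B^s_{1,1}}$, which is exactly the claimed continuous embedding.

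There is no real obstacle here: the statement is essentially a transcription of the atomic characterization of $B^s_{1,1}$ into the estimate \eqref{estint}. The only point deserving care is the mild subtlety of passing from convergence of the atomic series in $L^1$ to absolute convergence (hence convergence) in $B$, which relies in an essential way on both the embedding $B \hookrightarrow L^1$ and on the scale-matched bound \eqref{estint} — it is precisely the exponent $1-s$ in $\|1_J\|_B \leq C|J|^{1-s}$ that cancels the $|Q_n|^{s-1}$ coming from the normalization of atoms.
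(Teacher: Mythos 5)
Your argument is correct and is exactly the "quite simple" proof the paper has in mind: the atomic decomposition of $B^s_{1,1}$ is tailor-made so that the hypothesis $\|1_J\|_B\leq C|J|^{1-s}$ cancels the $|Q_n|^{s-1}$ normalization, giving absolute convergence of the atomic series in $B$, and the continuous embedding $B\hookrightarrow L^1$ identifies the $B$-limit with $\phi$. The only cosmetic point is that not every representation converging in $L^1$ has $\sum_n|c_n|<\infty$; the estimate applies to those that do, which is all that matters when passing to the infimum defining $\|\phi\|_{B^s_{1,1}}$.
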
 

\begin{prop}\label{esti}
Given $1/p > s$ there is $C\geq 0$ such that the following holds. For   every function $\psi\colon I \rightarrow \mathbb{C}$ that vanish outside a closed  interval $J$ and $v_p(\psi,J)< \infty$ we have that $\psi\in  B^s_{1,1}(I)$ and
$$|\psi- m(\psi,J)1_J|_{B^s_{1,1}(I)}\leq C |J|^{1-s} v_p(\psi,J).$$
Here
$$m(\psi,J)= \frac{1}{|J|} \int_J \psi  \ dm.$$
Note that $C$ does not depend on $J$. 
\end{prop}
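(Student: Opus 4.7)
The plan is to construct an explicit atomic decomposition of $\psi - m(\psi,J)\,1_{J}$ via a dyadic refinement of $J$ combined with a telescoping of conditional expectations. For each $n\ge 0$, let $\{J_{n,k}\}_{k=0}^{2^{n}-1}$ partition $J$ into $2^{n}$ equal subintervals and set
$$\psi_{n}=\sum_{k} m(\psi,J_{n,k})\,1_{J_{n,k}}.$$
Because a function of finite $p$-variation is bounded and $\psi_{n}$ is the conditional expectation of $\psi$ on the $n$-th dyadic $\sigma$-algebra, $\psi_{n}\to\psi$ pointwise and in $L^{1}(I)$ by dominated convergence, so the telescoping identity
$$\psi - m(\psi,J)\,1_{J}\;=\;\sum_{n=0}^{\infty}(\psi_{n+1}-\psi_{n})$$
converges in $L^{1}(I)$, which is the mode of convergence required by the definition of $B^{s}_{1,1}(I)$.

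The next step is to express each increment as a combination of atoms of the form $|J_{n+1,k}|^{s-1}\,1_{J_{n+1,k}}$. The function $\psi_{n+1}-\psi_{n}$ is piecewise constant on the children $J_{n+1,k}$, and because a parent average is the arithmetic mean of its two children averages, the constant value it takes on $J_{n+1,k}$ (with parent $J_{n,j(k)}$) has modulus $\tfrac{1}{2}|m(\psi,J_{n+1,2j(k)}) - m(\psi,J_{n+1,2j(k)+1})|$. Bounding the difference of means by a supremum of pointwise differences on $J_{n,j(k)}$ and using that any two-point difference of $\psi$ is at most $v_{p}(\psi,J_{n,j(k)})$, this modulus is at most $\tfrac{1}{2}\,v_{p}(\psi,J_{n,j(k)})$. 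Writing
$$\psi_{n+1}-\psi_{n} \;=\; \sum_{k} c_{n,k}\,|J_{n+1,k}|^{s-1}\,1_{J_{n+1,k}}$$
therefore produces coefficients with $|c_{n,k}|\le\tfrac{1}{2}\,v_{p}(\psi,J_{n,j(k)})\,|J_{n+1,k}|^{1-s}$.

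It remains to estimate $\sum_{n,k}|c_{n,k}|$. At level $n$, each of the $2^{n}$ parents has two children of length $|J|/2^{n+1}$, so
$$\sum_{k}|c_{n,k}|\;\le\;\Big(\tfrac{|J|}{2^{n+1}}\Big)^{1-s}\sum_{j=0}^{2^{n}-1} v_{p}(\psi,J_{n,j}).$$
H\"older's inequality in $\ell^{p}$ versus $\ell^{1}$ gives $\sum_{j} v_{p}(\psi,J_{n,j})\le 2^{n(1-1/p)}\bigl(\sum_{j} v_{p}^{p}(\psi,J_{n,j})\bigr)^{1/p}$, and iteration of the sub-additivity of $v_{p}^{p}$ from Lemma~\ref{prop} yields $\sum_{j} v_{p}^{p}(\psi,J_{n,j})\le v_{p}^{p}(\psi,J)$. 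Combining these inequalities bounds the level-$n$ total by a uniform constant times $|J|^{1-s}\,v_{p}(\psi,J)\,2^{n(s-1/p)}$. The hypothesis $1/p>s$ is used precisely here, to sum the resulting geometric series in $n$; this is the balance between the atomic normalisation $|J_{n+1,k}|^{1-s}\sim 2^{-n(1-s)}$ and the $\ell^{1}$-to-$\ell^{p}$ H\"older cost $2^{n(1-1/p)}$ at each scale, and identifying and controlling this balance is the only substantive obstacle in the argument.
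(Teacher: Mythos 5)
Your argument is correct and is essentially identical to the paper's: the same dyadic partition of $J$, the same telescoping $\psi - m(\psi,J)1_J = \sum_n(\psi_{n+1}-\psi_n)$, the same bound of the increment coefficients by $v_p$ on the parent, the same H\"older step combined with the superadditivity of $v_p^p$ from Lemma~\ref{prop}, and the same geometric series governed by $1/p>s$. The only cosmetic difference is that you exploit that the parent mean is the average of the two child means to get the constant $\tfrac12$, where the paper uses the slightly coarser bound $|m(\psi,P)-m(\psi,Q)|\leq 2v_p(\psi,Q)$, and you factor $(|J|/2^{n+1})^{1-s}$ out of the H\"older inequality directly (the intervals being of equal length) instead of carrying the $\ell^{p/(p-1)}$ sum of $|Q|^{1-s}$; neither change affects the structure of the proof.
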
 
 \begin{proof} We do a argument  similar to the proof of Proposition 16.3 in S. \cite{smania_2022-PDE}. Let $\mathcal{D}^k$ be the partition of $J$ by intervals of length $|J|/2^{-k}$. Then
 $$\psi = \lim_k \psi_k,$$
  in $L^1(m)$, where $$
 \psi_k = \sum_{P\in \mathcal{D}^k}  m(\psi,P) 1_P.$$
Note that 
$$\psi_0= \frac{1}{|J|} \int_J \psi \ dm.$$
We claim that sequence  converges in $B^s_{1,1}$.  Indeed
\begin{align*} &\psi_{k+1}-\psi_k \\
=&\sum_{P\in \mathcal{D}^{k+1}}  m(\psi,P) 1_P- \sum_{Q\in \mathcal{D}^{k}} m(\psi,Q) 1_Q\\
=&\sum_{Q\in \mathcal{D}^k} \sum_{\substack{P\in \mathcal{D}^{k+1}\\ P\subset Q}} (m(\psi,P)-  m(\psi,Q)) 1_P \end{align*} 
and
$$|m(\psi,P)-  m(\psi,Q)|\leq 2 v_p(\psi,Q), $$
so
\begin{align*} &\sum_{k\geq 0} |\psi_{k+1}-\psi_k|_{B^s_{1,1}(I)} \\
\leq & \sum_{k\geq 0} \sum_{Q\in \mathcal{D}^k} |Q|^{1-s}  4 v_p(Q)\\
\leq & 4 \sum_{k\geq 0}   \big( \sum_{Q\in \mathcal{D}^k} |Q|^{\frac{(1-s)p}{p-1}} \big)^{\frac{p-1}{p}}    \big( \sum_{Q\in \mathcal{D}^k} v_p^p(Q)\big)^{1/p}\\
& \leq 4 \sum_{k\geq 0}   \big( \sum_{Q\in \mathcal{D}^k} |Q|^{1+ \frac{1-sp}{p-1} }\big)^{\frac{p-1}{p}}    \big( \sum_{Q\in \mathcal{D}^k} v_p^p(Q)\big)^{1/p}\\
&  \leq  C |J|^{1-s} v_p(Q).
\end{align*} 
 \end{proof} 
\section{Proof of the main results} 
 \begin{proof}[Proof of Theorem \ref{main}]  
Let $\psi \in L^\infty(I)$, with $\psi \neq 0$ and $L\psi=0$. 
Note that 
$$|\psi\circ T^\ell|_{L^p(I)} \leq |\psi|_{L^\infty}$$ 
for every $p\in [1,\infty]$. Given $z \in \mathbb{C}$ with $|z|< 1$, define 

\begin{equation}
        \label{eq:def-hz}
        h_z = \sum_{\ell=0}^{\infty} z^{\ell} \psi \circ T^{\ell} \in L^p(I), \ with \ p\in [1,\infty].
    \end{equation} 
    Let 
    $$\mathcal{F}_\ell= \{\phi\circ T^\ell\colon \ \phi \in L^\infty(I) \ and \ L\phi=0 \}.$$
Then $\mathcal{F}_\ell$ are mutually orthogonal on $L^2(I)$ (see de Lima and S. \cite{amanda}). In particular  $h_z\neq 0$ since 
$$\{ \psi\circ T^\ell\}_\ell$$
is an orthogonal set in $L^2(I)$. Since $m$ is $T$-invariant we have that $L(\phi\circ T)=\phi$ for every $\phi \in L^\infty(I)$, and an  easy calculation shows that 
$$ Lh_z=zh_z.$$
That is a classical construction of eigenvalues of $L$ (see Collet and Isola\cite{CI91}) and indeed it shows that every $|z|< 1$ is an eigenvalue of $L$ with an infinite dimensional eigenspace on $L^p(I)$, for every   $p\in [1,\infty]$. 

 Since $T$ is piecewise $C^{1+\beta}$ we  can choose disjoint closed intervals $I_1, I_2 \subset I$ and $\beta$-H\"older functions $\phi_i\colon J_i \rightarrow \mathbb{R}$, $i=1,2$,  such that  
\begin{align*} &\psi= \phi_1 1_{J_1} - \phi_2 1_{J_ 2} \in L^\infty(I),\\ 
&v_{1/\beta}(\phi_i,J_i)< \infty,\end{align*} 
and  $L\psi=0$, with $\psi \neq 0$. Indeed note that the set of all possible such $\psi$ is infinite dimensional. We have that 
$$(\phi_j1_{I_j})\circ T^k = \sum_{i} \phi_j\circ T^k 1_{Q_i^k},$$
where $Q_i^k \subset I^k_i$ is an interval mapped by $T^k$ monotonically to a sub-interval of $I_j$. In particular 
\begin{align*} 
&|Q_i^k|\leq \theta^k_i,\\
&\sup_ i \sup_ k v_{1/\beta}(\phi_i\circ T^k,Q^k_i)< \infty,
\end{align*} 
In the last estimate we used Lemma \ref{prop}.  Since $\beta > s$,  due Proposition \ref{esti}  we have
$$|\psi\circ T^k|_{B^s_{1,1}}\leq C \sum_i (\theta_i^k)^{1-s} =C \Theta^k(1-s). $$
So if $|z|< 1/\Theta^\infty(1-s)$ we have that $h_z \in B^s_{1,1}\subset B$. Consequently 
$$r_{ess}(T,B)\geq 1/\Theta^\infty(1-s)$$
and
$$r_{ess}(T,B^s_{1,1})\geq 1/\Theta^\infty(1-s).$$
 \end{proof} 

\begin{proof}[Proof of Theorem \ref{BB}] We use the same notation as in the proof of Theorem \ref{main}. Since $T$ is linear on each branch we can  choose disjoint intervals $I_1$ and $I_2$ in $I$ and {\it positive constants}  $c_1$ and $c_2$ such that $\psi = c_1 1_{I_1}- c_21_{I_2}\in L^\infty(I)$, $L\psi=0$ and $\psi \neq 0$. In this case $\psi\circ T^k$ is a linear combinations of characteristic functions of intervals. So for $|z|< 1/k$ we have 
    \begin{equation}
        \label{hz}
        h_z = \sum_{\ell=0}^{\infty} z^{\ell} \psi\circ T^{\ell}.
    \end{equation}\
indeed converges in $B$ since 
$$|\psi\circ T^\ell|_B\leq C k^\ell.$$
So $r_{ess}(L,B)\geq 1/k$.
\end{proof}

\begin{proof}[Proof of Theorem \ref{NEW}] 
Suppose, for the sake of contradiction, that
\begin{equation}\label{pop23} r_{ess}(L,B) < 1/k.\end{equation}

Since $T$ is a piecewise $C^{r+1}$ markovian map, by Collet and  Isola \cite{CI91} (see also Baladi \cite{baladib})  we have that 
 $$ r_{ess}(L,C^{r}(I))=\exp(P_{top}(-(r+1)\log |DT|)) <  \frac{1}{k}.$$
 Let $z\in \mathbb{C}$ be  such that 
\begin{equation}\label{condz}  \max \{ \exp(P_{top}(-(r+1)\log |DT|)), r_{ess}(L,B)\}   < |z|<  \frac{1}{k}.\end{equation}
 
 In particular  there is a finite dimensional subspace $E\subset C^{r}(I)$ and a closed subspace $F\subset C^{r}(I)$ such that 
 \begin{itemize}
 \item[i.] $E$ and $F$ are $L^n$-invariant for every $n > 0$.
 \item[ii.] $r(L^n,F)< |z|^n$ for every $n > 0$.
 \item[iii.] $B= E\oplus F$
 \end{itemize} 
 and  there is a finite dimensional subspace $\hat{E}\subset B$ and a closed subspace $\hat{F}\subset B$ such that 
 \begin{itemize}
 \item[iv.] $\hat{E}$ and $\hat{F}$ are $L^n$-invariant for every $n > 0$.
 \item[v.] $r(L^n,\hat{F})< |z|^n$ for every $n > 0$.
 \item[vi.] $B= \hat{E}\oplus \hat{F}$
 \end{itemize} 

 Let $\mathcal{P}^n$ be the partition of $I$ by the intervals of monotonicity of $T^n$ and $\mathcal{F}^n\subset B$ be the space of functions that are constant of each element of $\mathcal{P}^n$. Note that if   $L^{n_0}\mathcal{F}^{n_0} \subset C^{r}(I).$ \ \\
 
 \noindent {\it Claim A. If  $n_0 > \dim E + \dim \hat{E} +1$ then there is $\psi \in\ \mathcal{F}^{n_0}$ that is not constant, $\psi \in \hat{F}$, and $L^{n_0}\psi\in F$.}
  \  \\ 
\noindent  Let $\pi_E,\pi_F$ be the  orthogonal projections associated with the decomposition $C^r(I)=E\oplus F$, and $\pi_{\hat{E}},\pi_{\hat{F}}$ be the  orthogonal projections associated with the decomposition $B=\hat{E}\oplus \hat{F}$.  Define
$$H\colon \mathcal{F}^{n_0}\rightarrow E \oplus \hat{E}$$ 
as $H(\psi)= ( \pi_E(L^{n_0}\psi), \pi_{\hat{E}}(\psi))$. If $n_0 > \dim E+\dim \hat{E}+ 1$ we have that $\dim Ker \ H\geq 2$, so it contains a non-constant function. This finishes the proof of the claim.  Choose $n_0$ and $\psi$ as in Claim A. \ \\ 

  \noindent {\it Claim B.   For every $n> 0$ we have that 
   \begin{equation}
        \label{hzf}
        h_{z,n} = \sum_{\ell=0}^{\infty} z^{\ell n } \psi\circ T^{\ell n}
    \end{equation}
    converges in $B$ and 
  $$L^nh_{z,n} = z^n h_{z,n} + L^n\psi.$$
  Moreover  for large $n$ we have that the image of $h_{z,n}$   is a Cantor set (up to a countable set). 
  
  } \ \\ 
    
\noindent Note that   $\psi\circ T^\ell$ is a linear combination of characteristic functions of intervals and  we have 
    \begin{equation}
        \label{hz2}
        h_{z,n} = \sum_{\ell=0}^{\infty} z^{\ell n } \psi\circ T^{\ell n}.
    \end{equation}\
indeed converges in $B$ since by (\ref{estint3}) 
$$|\psi\circ T^\ell|_B\leq C k^\ell.$$
 One can easily check that 
\begin{equation}\label{cohomo}   -z^{-n} \psi = h_{z,n}\circ T^n -  z^{-n } h_{z,n},\end{equation} 
and $h_{z,n}$ is the unique bounded function that is a  solution of such cohomological equation. Let 
$$\mathcal{I}= \{y \in \mathbb{C}\colon \ \psi= y \text{  in   some } P\in \mathcal{P}^{n_0} \}.$$
Since $\psi$ is not constant we have that $\#\mathcal{I}\geq 2$.  For every $q \in \mathcal{I}$ define  the affine map
$$\phi_{q,n} \colon \mathbb{C} \rightarrow \mathbb{C}$$
given by 
$$\phi_{q,n}(u)= z^{-n}u -z^{-n}q.$$
One can rewrite (\ref{cohomo}) as 
$$\phi_{\psi(x),n}  \circ h_{z,n}(x) =h_{z,n}\circ T^n(x).  $$

Observe that the unique fixed point of $\phi_{q,n}$ is 
$$x_{q,n}=\frac{q}{1-z^n},$$
and $\lim_n x_{q,n} =q$. 
Let $R= 2 diam \ \mathcal{I}$ and choose $q_0\in \mathcal{I}$. If   $n> n_0$ is  large enough 
\begin{itemize} 
\item[-]  We have $$\phi_{q,n}^{-1}(B(q_0,R))\subset B(q_0,R),$$
for every $q\in \mathcal{I}$,
\item[-] $\phi_{q_1,n}^{-1}(B(q_0,R))$ and $\phi_{q_2,n}^{-1}(B(q_0,R))$ are disjoint for every $q_1,q_2\in \mathcal{I}$ with $q_1\neq q_2$. 
\end{itemize} 
In particular the map
$$G_n\colon \bigcup_{q\in \mathcal{I}}  \phi_{q,n}^{-1}(B(q_0,R)) \rightarrow B(q_0,R)$$
defined by $G_n(x)= \phi_{q,n}(x)$ for $x\in\phi_{q,n}^{-1}(B(q_0,R))$,  is a conformal expanding map and its maximal invariant set 
$$\Omega_n =\bigcap_{\ell >0}  G^{-\ell}_n B(q_0,R)$$
is a Cantor set.  Let 
$$\hat{h}_{z,n}\colon I \rightarrow \mathbb{C}$$
be the bounded function defined by 
$$\hat{h}_{z,n}(x)= \lim_k   \phi_{\psi(x),n}^{-1}\circ \phi_{\psi(T^nx),n}^{-1}\circ \phi_{\psi(T^{2n}x),n}^{-1}\cdots \circ \phi_{\psi(T^{kn} x),n}^{-1}(q_0)\in \Omega_n\cap \phi^{-1}_{\psi(x),n}(B(q_0,R))$$
Since $T^n$ has only full branches  the image of $\hat{h}_{z,n}$ is the Cantor set $\Omega_n$. 
It is easy to see that 
$$G_n  \circ \hat{h}_{z,n}(x) =\hat{h}_{z,n}\circ T^n(x),  $$
that is
$$\phi_{\psi(x),n}  \circ \hat{h}_{z,n}(x) =\hat{h}_{z,n}\circ T^n(x),  $$
so 
$$ -z^{-n} \psi = \hat{h}_{z,n}\circ T^n -  z^{-n } \hat{h}_{z,n}.$$
and consequently $\hat{h}_{z,n}=h_{z,n}$. 
 \ \\ 

\noindent {\it Claim C.  For $n\geq n_0$  we have that 
$$w_{z,n} = \sum_{\ell=1}^\infty z^{-\ell n} L^{\ell n} \psi$$
converges in $B$ and $C^r(I)$, and moreover   \begin{equation} \label{popp}  L^nw_{z,n}=z^n w_{z,n} - L^n\psi.\end{equation} 
}
\ \\
Since  $L^{n_0}\psi\in F\cap \hat{F}$ the above series converges in $B$ and  $C^r(I)$.  It is easy to verify (\ref{popp}).\\

\noindent {\it Claim D.  For large $n$ we  have that $h_{z,n}+w_{z,n} \neq 0$ and 
\begin{equation}\label{rrr} L^n(h_{z,n}+w_{z,n})=z^n(h_{z,n}+w_{z,n}).\end{equation} }

\noindent The equality (\ref{rrr}) is obvious.  Note that due Claim B. we have  that for large $n$ the image of $h_{z,n}$ is a Cantor set (up to a countable set), and the image of $-w_{z,n}$ is a  (perhaps empty) interval (up to a finite subset). So $h_{z,n}+w_{z,n} \neq 0$ and $z^n$ is an eigenvalue of $L^n$. In particular there is $\delta \in \mathbb{C}$, with $\delta^n=1$, such that $\delta z$ is an eigenvalue of $L$. 

Since $z$ can be an arbitrary complex number satisfying (\ref{condz}), Claim D. implies that $r_{ess}(L^n,B)\geq 1/k^n$ and  $r_{ess}(L,B)\geq 1/k$.
\end{proof}

 \begin{proof}[Proof of Theorem \ref{thm2}]

  Since  $n(\phi)=||\phi||_B$ is natural, it can be written as in (\ref{pn}), where $u_i$ there are purely natural pseudo-norm $n_i$, with $i\leq j$, and degree of homogeneity $t_i$. 

Given $\phi \in B$, define
$$t_{\max}(\phi)=\max \{t_i\colon \ n_i(\phi) > 0\}.$$

\noindent {\it Claim I.} Let $\mathcal{I}$ be the family of all characteristic functions of intervals in $I$. Then $t_{\max}$ is constant on $\mathcal{I}$. Let $t_{\max}(\mathcal{I})$ be  the valued of $t_{\max}$ on $\mathcal{I}$. There is $C_1$ such that 
$$\frac{1}{C_1} |Q|^{-t_{\max(\mathcal{I})}}
  \leq ||1_Q||_B\leq C_1 |Q|^{-t_{\max(\mathcal{I})}}$$
  
  \ \\
\noindent Given a interval $Q \subset I$ there is an affine map $\psi$ such that $\psi_Q(Q)=I$, with $|\psi'|=|I|/|Q|=1/|Q|$,  so
$$1_Q = 1_I\circ \psi$$
and due the the almost homogeneity of the norm there is $K_i> 0$ such that 
$$\frac{1}{k_i} |Q|^{-t_i} n_i(1_I)
  \leq u_i(1_Q) \leq K_i |Q|^{-t_i}n_i(1_I)$$
for every interval $Q\subset I$. It follows that $t_{\max}$ is constant on $\mathcal{I}$. Note also that 
\begin{align*} ||1_Q||_B&\leq  K_{t_{\max}}^{-1}|Q|^{-t_{\max}} u_{t_{\max}}(1_I)+ \sum_{t_i < t_{\max}}  K_{t_i}^{-1}|Q|^{-t_i} u_{t_i}(1_I)\\
&\leq K_{t_{\max}}^{-1}|Q|^{-t_{\max}} \Big( u_{t_{\max}}(1_I)+ \sum_{t_i < t_{\max}}  \frac{K_{t_i}^{-1}}{K_{t_{\max}}^{-1}} |Q|^{t_{\max} -t_i} \frac{u_{t_i}(1_I)}{u_{t_{\max}}(1_I)} \Big)\\
&\leq C|Q|^{-t_{\max}} ||1_I||_B.
\end{align*} 
and the opposite inequality  is obtained with a similar argument.  This finishes the proof of the claim.

Let  $\mathcal{P}_k$ be   the Markov partition of $T^k$.  Given  an interval $P\in \mathcal{P}^{k+1}$, chose $Q_1,Q_2\in \mathcal{P}^{k+2}$ such that $Q_i\subset P$, with $i=1,2$, and $Q_1\neq Q_2$. Define
 $$a_P= \frac{1_{Q_1}}{|Q_1|}- \frac{1_{Q_2}}{|Q_2|}.$$
 Note that 
 $$\int a_P \ dm=0.$$ 

\noindent {\it Claim II.} There is $C > 0$ and $\lambda \in (0,1)$ such that for every $P\in \mathcal{P}^k$ we have
$$||a_{P}||_B\geq C\lambda^{-k}.$$
\ \\ 
\noindent Since $L$ has spectral gap on $B$ there is $C\geq 0$, $\lambda\in (0,1)$ such that for every function $a \in B$ such that    
$$\int a \ dm =0,$$
we have
$$ ||L^ka||_B\leq C\lambda^k ||a||_B,$$
So given $P\in \mathcal{P}^{k+1}$ we have 
$$ ||L^ka_P||_B\leq C\lambda^k ||a_P||_B,$$
On  the other hand since $B$ is continuously embedded in $L^1(m)$ there is $C> 0$ such 
$$||L^ka_P||_B\geq C ||L^ka_P||_{L^1(m)}=C||a_P||_{L^1(m)}=2C> 0,$$ 
Consequently  there is $C> 0$ such that 
$$||a_{P}||_B\geq C\lambda^{-k}.$$

\noindent{\it Claim III.}  We have $t_{\max}(\mathcal{I}) >-1$. \\

It follows from  Claim II. that for every    $P\in \mathcal{P}^{k+1}$ there is $Q\in \mathcal{P}^{k+2}$, with $Q \subset P$, satisfying 
$$||\frac{1_Q}{|Q|} ||_B\geq \frac{C}{2}\lambda^{-k}. $$
Let $$\alpha = \min\frac{1}{|DT|}< 1.$$
Then
$$|Q|\geq \alpha^{k+1},$$
It follows that 
$$||1_Q ||_B\geq \frac{C}{2}\lambda^{-k}|Q|\geq \frac{C}{2}\alpha^{-k\frac{\ln \lambda}{\ln \alpha}}|Q|\geq C |Q|^{1-\beta},  $$
with
$$\beta = \frac{\ln \lambda}{\ln \alpha} >0.$$
Due Claim II. that implies $1+t_{\max}(\mathcal{I})\geq \beta>0$. \\

\noindent {\it Claim IV.} We have $t_{\max}(\mathcal{I}) \leq   0$.  

Suppose that $t_{\max}(\mathcal{I})> 0$. Let $C_1$ be as in Claim I. Choose $\theta$ such that $\theta \in (0,1/4)$,  $\theta^{t_{\max}(\mathcal{I})} \in (0,1/4)$ and such that 
$$\frac{1}{C_1}\theta^{-t_{\max}(\mathcal{I})}> 2C_1.$$
Let $Q_i=[x_i,x_{i+1}]$, with 
$x_0=0$ and 
$$|Q_i|=\theta^i$$
Then by Claim I  
\begin{align*}
C_1 |Q_0|^{-t_{\max}(\mathcal{I})}&\geq C_1 |[0,x_{k+1}]|^{-t_{\max}(\mathcal{I})}\\
&\geq 
||1_{[0,x_{k+1}]}||_B= ||\sum_{i=0}^k1_{[x_i,x_{i+1}]}||_B\\
&\geq ||1_{Q_k}||_B - \sum_{i=0}^{k-1} ||1_{Q_i}||_B\\
&\geq \frac{1}{C_1}\theta^{-kt_{\max}(\mathcal{I})}-\sum_{i=0}^{k-1} C_1 \theta^{-it_{\max}(\mathcal{I})}\\
&\geq \frac{1}{C_1}\theta^{-kt_{\max}(\mathcal{I})}-C_1 \theta^{-(k-1)t_{\max}(\mathcal{I})} \sum_{i=0}^{k-1}  \theta^{it_{\max}(\mathcal{I})}\\
&\geq \frac{1}{C_1}\theta^{-kt_{\max}(\mathcal{I})}-C_1 \theta^{-(k-1)t_{\max}(\mathcal{I})} \geq C_1 \theta^{-(k-1)t_{\max}(\mathcal{I})}.
\end{align*}
so if 
$k$ is sufficiently large, we arrive at a contradiction. This concludes the proof of Claim IV. \\

\noindent  Now we can conclude the proof of Theorem~\ref{thm2}. We know that $-1 < t_{\max}(\mathcal{I})  \leq  0$. If $t_{\max}(\mathcal{I})=0$ we can apply Theorem \ref{NEW} and obtain Case I. Otherwise we can take $s=1+t_{\max}(\mathcal{I}) \in (0,1)$. The remaining conclusions follow from Theorem \ref{main}.

 \end{proof}

\section*{Acknowledgements}

\begin{small}
    O.B. was partially supported by PRIN Grant ``Regular and stochastic behaviour in dynamical systems" (PRIN 2017S35EHN).
    O.B. acknowledges the MIUR Excellence Department Projects awarded to the Department of Mathematics, University of Rome Tor Vergata, CUP E83C18000100006, CUP E83C2300033000s6.
    This research is part of the O.B.'s activity within the UMI Group ``DinAmicI'' and the INdAM group GNFM.  
    D.S.  was financed  by the S\~ao Paulo Research Foundation (FAPESP), Brasil, Process Number 2017/06463-3, and Bolsa de Produtividade em Pesquisa CNPq-Brazil 311916/2023-6.
\end{small}



\bibliographystyle{plain}
\bibliography{references.bib}

\end{document}